\newcommand{\lvt}{\left|\kern-1.35pt\left|\kern-1.3pt\left|}
\newcommand{\rvt}{\right|\kern-1.3pt\right|\kern-1.35pt\right|}
\newtheorem{thm}{Theorem}
\newtheorem{cor}{Corollary}
\newtheorem{lem}{Lemma}
\newtheorem{THEO}{Theorem}
\newtheorem{QUE}{Question}
\theoremstyle{remark}
\newcommand{\bddots}{%
  \mathinner{\mkern1mu\raise\p@\vbox{\kern7\p@\hbox{.}}\mkern2mu
    \raise4\p@\hbox{.}\mkern2mu\raise7\p@\hbox{.}\mkern1mu}}
 \def\NN{{\mathbb N}}
 \def\RR{{\mathbb R}}
\begin{document}

\title[Maclaurin coefficients of functions in the Laguerre-P\'olya class]
{Sign regularity of Maclaurin coefficients of functions in the Laguerre-P\'olya class}

\author{Dimitar K. Dimitrov}
\address{Departamento de Matem\'atica Aplicada\\
 IBILCE, Universidade Estadual Paulista\\
 15054-000 Sa\~{o} Jos\'e do Rio Preto, SP, Brazil.}
 \email{dimitrov@ibilce.unesp.br}
\author{Willian D. Oliveira}
\address{Departamento de Matem\'atica Aplicada\\
 IBILCE, Universidade Estadual Paulista\\
 15054-000 Sa\~{o} Jos\'e do Rio Preto, SP, Brazil.}
 \email{wdoliveira@ibilce.unesp.br}
 
 \thanks{Research supported by by the Brazilian foundations CNPq under
Grant 307183/2013-0 and FAPESP under Grant 2013/14881-9.}

\keywords{Laguerre-P\'olya class of entire functions, Maclaurin coefficients, multiplier sequence, sign regularity}
\subjclass[2010]{30D10, 30D15}

\begin{abstract} We prove that the signs of the Maclaurin coefficients of a wide class of entire functions that belong to the 
Laguerre-P\'olya class posses a regular behaviour.     
\end{abstract}

\maketitle

\section{Introduction}
\setcounter{equation}{0}

A real entire function $\varphi$ is in the Laguerre-P\'olya class, written $\varphi \in \mathcal{LP}$, if 
\begin{equation}\label{LP} 
\varphi(z) = c z^m e^{a z^2+b z} \prod_{k=1}^\omega (1- z/x_k) e^{\lambda z/x_k},\ \ 0\leq \omega \leq  \infty,
\end{equation}
for some nonnegative integer $m$, $c, a, b \in \RR$, $a \leq 0$, $\lambda \in \{0,1\}$  and $x_k \in \RR \setminus\{0\}$ such that $\sum_k |x_k|^{-\lambda-1} < \infty$. 
When $\omega=0$ the product on the right-hand side of (\ref{LP}) is defined to be identically one. Otherwise 
the terms of the product are arranged according to the increasing order of $|x_k|$.

Observe that  $\varphi \in \mathcal{LP}$ if and only if $\varphi(z)=\exp(a z^2) \phi(z)$, where $a\leq 0$ and $\phi$ is a real entire function with 
real zeros of genus at most one.  

If $a=\lambda=0$ and $b\, \alpha_k\leq 0$  for every $k\in \mathbb{N}$ in the representation of $\varphi(z)$, the function  $\varphi(z)$ is said to belong to the Laguerre Polya Class of type $I$, denoted by $\varphi \in \mathcal{LP}I$. In other words, $\varphi \in \mathcal{LP}I$ if and only if either  $\varphi(z)$ or $\varphi(-z)$ can be represented in the form
$$\varphi(z) = c z^m e^{b z} \prod_{k=1}^\omega (1- z/x_k),$$
where $c\in \RR$, $m\in \NN \cup \{0\}$, $b\leq 0$, $x_k>0$ and $\sum_k x_k^{-1} < \infty$.

The functions in $\mathcal{LP}$ and only these are uniform limits on the compact sets of the complex 
plane (locally uniform limits) of hyperbolic polynomials, that is, real polynomials with only real zeros. Similarly, the functions in $\mathcal{LP}I$ are the locally uniform limits of hyperbolic polynomials with zeros of the same sign.

The class $\mathcal{LP}$ of functions was studied first by Laguerre \cite{Lag}  
and later then by P\'olya,  Jensen, Schur, Obrechkoff (see \cite{Lev1, Obre2003} and the references therein) and other celebrated mathematicians  in 
the beginning of the twentieth century because of the attempts to settle the Riemann hypothesis. The connection between the latter and the Laguerre-P\' olya Class is straightforward and we 
refer to \cite{CV, DR} for the details.

The properties of the functions in the Laguerre-P\'olya class are tightly connected with the notion of multiplier sequence. A real sequence $\{\gamma_n\}_{n=0}^\infty$ is called a multiplier sequence if for any hyperbolic polynomial 
\begin{equation}\label{P}
p(z)=a_0+a_1z+\dots+a_mz^m
\end{equation}
with zeros of the same sign, the polynomial
\begin{equation}\label{Q}
q(z)=a_0\gamma_0+a_1\gamma_1z+\dots+a_n\gamma_m z^m
\end{equation}
is a hyperbolic polynomial. Similarly, the sequence $\{\gamma_n\}_{n=0}^\infty$ is said to be a multiplier sequence of type $I$ if for any hyperbolic polynomial $p$ of the form (\ref{P}), the polynomial
$q$ given by (\ref{Q}) is a hyperbolic polynomial too. P\'olya and Schur \cite{PS} proved that the sequence $\{\gamma_n\}_{n=0}^\infty$ is a multiplier sequence if and only if the series
\begin{equation}
\label{ME}
\sum_{n=0}^\infty \frac{\gamma_n}{n!} z^n, 
\end{equation}
 represents an entire function in $ \mathcal{LP}$ class. In particular the sequence  $\{\gamma_n\}_{n=0}^\infty$ is a multiplier sequence of type $I$ if and only if the latter series represents an entire function in $ \mathcal{LP}I$ class. 
 
Therefore, given a real  entire function with Maclaurin expansion 
$$
\varphi(z) = \sum_{n=0}^\infty a_n z^n, 
$$
it is of interest to provide necessary and/or sufficient conditions in terms of the behaviour of the sequence $\{ a_n \}$ in order that 
$\varphi$ belongs to $\mathcal{LP}$. In the present note we establish some results of this nature.  We are interested if the signs of the coefficients $a_n$ exhibit some regular patterns.  For this purpose it is convenient to divide $\mathcal{LP}$ into three 
 subclasses. The first one is simply $\mathcal{LP}I$. The second class, denoted by $\mathcal{LP}^a$ consists of function in $\mathcal{LP}$ for which 
 the constant $a$ in the exponent in the representation (\ref{LP}) is nonzero. The third class consists of entire function in $\mathcal{LP}$ for which 
 $a=0$ but do not belong to $\mathcal{LP}I$ and is denoted by $\mathcal{LP}^0$.   
 
 It is straightforward that the Maclaurin coefficients of a nonpolynomial function $\varphi \in \mathcal{LP}I$ are either all of the same sign or their signs
 alternate. In fact, P\'olya and Schur \cite{PS} proved that a real entire function in the Laguerre-P\'olya class which exhibits this sign regularity of its coefficients is 
 necessarily in $\mathcal{LP}I$.     
 
However, Laguerre \cite{Lag} provided a beautiful example of a parametric family of entire functions in the Laguere-P\'olya class whose coefficients may exhibit signs form a 
rather irregular sequence for certain choices of the parameters. 
He proved that 
$$
\varphi(z)=\sum_{n=0}^{\infty} \frac{\cos{(\vartheta+n\theta)}}{n!}z^n
$$
is in $\mathcal{LP}$. In fact, it is straightforward to observe that  
$$
\varphi(z)= e^{z\cos \theta}\, \cos(\vartheta +z\sin\theta).
$$
We emphasise that $a=0$ and $\varphi \notin \mathcal{LP}I$ because its zeros are not of the same sign. Therefore $\varphi \in \mathcal{LP}^0$.
Varying the choices of $\vartheta$ and $\theta$ we observe a great variety of possibilities for the distribution of the signs in the sequence $\cos (\vartheta+n\theta)$, 
especially when $\theta/2\pi$ is an irrational number. Laguerre's example shows that one can hardly expect regularity in the distribution of the signs of the Maclaurin coefficients
of a function in $\mathcal{LP}^0$, similar to the one in $\mathcal{LP}I$. 
 
The situation changes dramatically when one considers the class  $\mathcal{LP}^a$. The presence of the factor $\exp(a z^2)$, $a<0$, seems to put order in 
the distribution of the signs of $a_n$.  Let us consider three illustrative examples. The signs of the first Maclaurin coefficients of $\exp(-z^2)\exp(z)$,  $\exp(-z^2)(1/\Gamma(z)$ 
and $\exp(-z^2)\cos\sqrt{z}$  are
$$\{1, 1, -1, -1, 1, 1, 1, -1, -1, 1, 1, -1, -1, 1, 1, -1, -1, 1, 1, -1, -1, -1, 1, 1, -1, -1, 1, 1\},$$
$$\{0, 1, 1, -1, -1, 1, 1, -1, -1, 1, 1, -1, 1, 1, -1, -1, 1, 1, -1, -1,1, 1, -1, -1, 1, 1, -1, -1\},$$
$$\{1, -1, -1, 1, 1, -1, -1, 1, 1, -1, -1, 1, 1, -1, -1, 1, 1, -1, -1,1, 1, -1, -1, 1, 1, -1, -1\},$$
respectively. Observe that the common pattern $\{1, 1, -1, -1\}$ appears frequently. A detailed analysis of these and a 
vast number of other function in $\mathcal{LP}^a$ led us to observe that this phenomenon occurs infinitely many times.
This pattern can be formalized  stating that the inequalities $a_{n-1}a_{n+1}<0$ hold asymptotically. These observations made us pose the following:
\begin{QUE} 
\label{Prob1}
Let $\varphi(z)=\sum_{n=0}^{\infty}a_n z^n$ be a real entire function with only real zeros. Is it true that $\varphi \in \mathcal{LP}^a$
if and only if  $0< \limsup\ n |a_n|^{2/n}<\infty$ and there exists $n_0 \in \mathbb{N}$, such that $a_{n-1} a_{n+1}\leq 0$ for all $n>n_0$?
\end{QUE}
We have substituted the strict inequalities $a_{n-1}a_{n+1}<0$ by $a_{n-1} a_{n+1}\leq 0$ in order to cover the possibility of occurence of zeros in the sequence $a_n$ which happens, for instance, when 
the entire function is either even or odd.  We establish a partial results towards an affirmative answer to Question \ref{Prob1}, proving that the inequalities $a_{n-1} a_{n+1}\leq 0$, together with adequate asymptotic rate of $a_n$ guarantees 
that a real entire function with real zeros indeed belongs to $\mathcal{LP}^a$. Formally, one of our main results reads as:
\begin{thm}
\label{Resultado 2} Suppose that the real function 
$$
\varphi(z)=\sum_{n=0}^{\infty}a_n z^n
$$ 
possesses only real zeros.  If there exist $n_0 \in \mathbb{N}$, such that $a_{n-1} a_{n+1}\leq 0$ for all $n>n_0$ and $0< \limsup_{n\to \infty}\ n |a_n|^{2/n}<\infty$, then $\varphi \in \mathcal{LP}^a$.
\end{thm}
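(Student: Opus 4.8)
The plan is to argue by contradiction, combining the hypothesis $a_{n-1}a_{n+1}\le 0$ with the growth condition to pin down the precise structure of the genus of $\varphi$. Write $\varphi(z)=cz^m e^{az^2+bz}\prod_k(1-z/x_k)e^{z/x_k}$ (the genus-one Hadamard factorization forced by $\limsup n|a_n|^{2/n}<\infty$, which gives order $\le 2$ and in fact order exactly $2$ with convergence class controlled). The growth bound $\limsup_n n|a_n|^{2/n}=L\in(0,\infty)$ says, roughly, that $|a_n|\approx (\rho/n)^{n/2}$ for some $\rho>0$ along a subsequence, which is exactly the coefficient behaviour of $\exp(az^2)$ with $a=-1/(2\rho)$; so morally the target is to show $a\ne 0$. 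I would first dispose of the cases $\varphi\in\mathcal{LP}I$ and $\varphi\in\mathcal{LP}^0$ separately. For $\mathcal{LP}I$: there the coefficients are eventually of one sign or strictly alternating, so $a_{n-1}a_{n+1}\ge 0$ eventually; combined with $a_{n-1}a_{n+1}\le 0$ this forces $a_{n-1}a_{n+1}=0$ eventually, hence (since $\varphi$ is not a polynomial, by the growth hypothesis) every other coefficient vanishes, making $\varphi(z)=\psi(z^2)$ or $z\psi(z^2)$ — but then $\psi\in\mathcal{LP}I$ has coefficients of one sign, contradicting that $\varphi$, being in $\mathcal{LP}I$ up to reflection with real zeros of one sign, cannot have $\psi(w)=\sum b_k w^k$ with $b_k$ all positive unless... — more carefully, $\psi(-w^2)$ type reasoning shows the zeros of $\varphi$ are symmetric, contradicting $\mathcal{LP}I$ unless $\varphi=cz^m$. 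This rules out $\mathcal{LP}I$.

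The heart is excluding $\mathcal{LP}^0$, i.e. showing that if $a=0$ then the sign condition $a_{n-1}a_{n+1}\le 0$ fails infinitely often. Here I would use Laguerre's example as the guiding obstruction: in $\mathcal{LP}^0$ with $a=0$, after extracting the genus and the polynomial part, $\varphi$ behaves like $e^{bz}$ times a canonical product of genus one with zeros on both sides of the origin, and such functions of order $1$ (maximal or minimal type) have coefficients with sign patterns governed by an oscillatory factor of the form $\cos(\vartheta+ (\text{const})\, n\, \theta)$ plus lower-order corrections; the key quantitative input is that when $a=0$ the decay of $a_n$ is like $(R/n!)\cdot(\text{bounded oscillation})$, which forces $n|a_n|^{2/n}\to 0$, contradicting $\limsup n|a_n|^{2/n}>0$. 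In other words: the condition $0<\limsup n|a_n|^{2/n}$ already excludes $\mathcal{LP}^0\cup\mathcal{LP}I$ on growth grounds alone, because functions there are either of order $\le 1$ (giving $n|a_n|^{2/n}\to 0$) or have one-signed/alternating coefficients. I would make the order-$1$ estimate rigorous via the standard relation between order/type and $\limsup (n|a_n|^{1/n})$ vs $\limsup(n|a_n|^{2/n})$: order $<2$, or order $2$ minimal type, gives $\limsup n|a_n|^{2/n}=0$.

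Assembling: the growth hypothesis $\limsup n|a_n|^{2/n}<\infty$ gives order at most $2$; the hypothesis $>0$ gives order exactly $2$ of positive (mean) type, hence in the Hadamard factorization the quadratic exponent is genuinely present, i.e. $a<0$ — but one must check the exponential factor's $az^2$ is the only source of order-$2$ growth, which holds since the canonical product over the zeros, together with $\sum|x_k|^{-\lambda-1}<\infty$, contributes order at most $2$ but type zero if $\lambda=1$ and the $az^2$ term is absent. The role of the sign condition $a_{n-1}a_{n+1}\le 0$ is then to guarantee we are not in the degenerate situation where $\varphi$ is a polynomial (ruled out by growth anyway) or where cancellation in $a_n$ produces spurious vanishing; more precisely it is used to confirm $\varphi\notin\mathcal{LP}I$, since in $\mathcal{LP}I$ the sign condition would be violated for a nonpolynomial function. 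I expect the main obstacle to be the careful bookkeeping showing that \emph{positive} type of order $2$ is equivalent to $a<0$ in the presence of the genus-one canonical product — i.e. ruling out that the zeros alone, without the $e^{az^2}$ factor, could conspire to give $\limsup n|a_n|^{2/n}>0$; this requires a Lindelöf-type estimate on the canonical product $\prod(1-z/x_k)e^{z/x_k}$ under the convergence condition $\sum|x_k|^{-2}<\infty$, showing it has type zero with respect to order $2$, so that the positive order-$2$ type must come from $a<0$.
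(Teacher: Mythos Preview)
Your growth analysis is essentially correct and matches the paper: the hypothesis $0<\limsup_n n|a_n|^{2/n}<\infty$ forces $\varphi$ to have order exactly $2$ and positive finite type, and then a Lindel\"of-type argument shows that in the Hadamard factorisation $\varphi(z)=cz^k e^{az^2+bz}\,\Pi(z)$ the canonical product $\Pi$ has genus $p\le 1$ and contributes type zero at order $2$, so that $\sigma_\varphi=|a|>0$. This already disposes of the case $a=0$, and hence of both $\mathcal{LP}I$ and $\mathcal{LP}^0$, on growth grounds alone.

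The genuine gap is this: you organise the proof around the trichotomy $\mathcal{LP}=\mathcal{LP}I\cup\mathcal{LP}^0\cup\mathcal{LP}^a$ and aim to exclude the first two summands, but that trichotomy presupposes $\varphi\in\mathcal{LP}$, which is \emph{not} among the hypotheses. The theorem only assumes that $\varphi$ is a real entire function with real zeros; such a function need not lie in $\mathcal{LP}$ at all (for instance $e^{z^2}\cos z$ has real coefficients and only real zeros, yet $a=1>0$). So after your Lindel\"of step you know $a\neq 0$, but you have done nothing to exclude $a>0$, and your sentence ``the quadratic exponent is genuinely present, i.e.\ $a<0$'' is an unjustified jump. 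Your diagnosis that the sign condition $a_{n-1}a_{n+1}\le 0$ serves to rule out $\mathcal{LP}I$ is therefore misplaced: that case is already killed by growth, as you yourself observe a few lines later.

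The paper uses the sign condition precisely to eliminate $a>0$. Writing $\varphi=P_{n_0}+g$ with $P_{n_0}$ a polynomial and $g(z)=\sum b_n z^n$ satisfying $b_{n-1}b_{n+1}\le 0$ for all $n$, one checks that along the imaginary axis the even- and odd-indexed terms each add with a single sign, so that $|g(it)|\ge \tfrac{1}{\sqrt{2}}\sum_n |b_n|t^n=\tfrac{1}{\sqrt{2}}M_g(t)$. If $a>0$ this forces $|\varphi(it)|\gtrsim e^{(a/2)t^2}$ for large $t$. On the other hand the factorisation gives $|\varphi(it)|=|c|\,t^k e^{-at^2}\,|\Pi(it)|$, and since $\Pi$ has type zero at order $2$ this quantity is bounded for large $t$. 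The contradiction yields $a<0$. This is the step your outline is missing.
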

We state a relevant result concerning the necessity statement of the question too.

\begin{thm} \label{Resultado 3.} If 
$$
\varphi(z)=\sum_{n=0}^{\infty}a_n z^n \in \mathcal{LP}^a
$$ 
can be represented in the form 
$$
\varphi(z)=\exp(az^2)P(z),
$$
where $a<0$ and $P$ is a hyperbolic polynomial, then $0< \limsup_{n\to \infty} \ n |a_n|^{2/n} <\infty$ and there exists $n_0 \in \mathbb{N}$, such that $a_{n-1} a_{n+1}\leq 0$ for every $n>n_0$.
\end{thm}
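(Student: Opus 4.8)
The plan is to reduce everything to an explicit computation with the function $\varphi(z)=\exp(az^2)P(z)$, $a<0$, where $P(z)=\sum_{k=0}^{d}c_k z^k$ is a hyperbolic polynomial of some degree $d$. Writing $\exp(az^2)=\sum_{j\ge 0} a^j z^{2j}/j!$ and convolving with $P$, the coefficient $a_n$ is a finite sum
\begin{equation*}
a_n=\sum_{k=0}^{d} c_k\,\frac{a^{(n-k)/2}}{((n-k)/2)!},
\end{equation*}
where only the indices $k$ with $n-k$ even and nonnegative contribute. The crucial structural observation is that, up to an overall factor that is a power of $|a|$ times a ratio of factorials, $a_n$ behaves like $|a|^{n/2}/\lfloor n/2\rfloor!$ times a bounded correction term coming from $P$. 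First I would make this precise: separate the cases $n$ even and $n$ odd, factor out $a^{\lfloor n/2\rfloor}/\lfloor n/2\rfloor!$, and show that the remaining sum converges to $c_0$ (for $n$ even) or to a nonzero multiple of the lowest odd-degree coefficient (for $n$ odd) as $n\to\infty$ — more precisely, the ratio $a_n \big/ \big(a^{\lfloor n/2\rfloor}/\lfloor n/2\rfloor!\big)$ tends to a nonzero constant depending on the parity of $n$, because the leading term dominates all others by factors of order $n^{-1}$.

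From this asymptotic description both conclusions follow. For the growth statement, $|a_n|^{2/n}\sim |a|\cdot (\lfloor n/2\rfloor!)^{-2/n}$, and Stirling's formula gives $(\lfloor n/2\rfloor!)^{-2/n}\sim 2e/n$, so $n|a_n|^{2/n}\to 2e|a|$, which lies strictly between $0$ and $\infty$. For the sign statement, I would argue that $a_{n-1}$ and $a_{n+1}$ have opposite parity of index relative to nothing — rather, $a_{n-1}$ and $a_{n+1}$ both have the same parity (both even-indexed or both odd-indexed), so the same asymptotic constant governs both, but the power of $a<0$ jumps from $a^{m}$ to $a^{m+1}$: indeed $\lfloor (n+1)/2\rfloor=\lfloor (n-1)/2\rfloor+1$. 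Hence for $n$ large, $a_{n-1}$ and $a_{n+1}$ have the signs of $a^{\lfloor(n-1)/2\rfloor}$ and $a^{\lfloor(n-1)/2\rfloor+1}$ times a common positive constant, and since $a<0$ these signs are opposite, giving $a_{n-1}a_{n+1}<0$ for all $n$ beyond some $n_0$. I would need to treat separately the degenerate situation where some of these leading correction constants vanish (for instance if $P$ is even, all odd-indexed $a_n$ vanish and the inequality holds as equality), which is exactly why the statement is phrased with $\le 0$.

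The main obstacle is controlling the correction sum uniformly: one must show that $\sum_{k\ge 1} c_k\, a^{-\lceil k/2\rceil}\,\lfloor n/2\rfloor!\big/\lfloor (n-k)/2\rfloor!$ is $O(1/n)$ with the implied constant independent of $n$, and that the $k=0$ (or lowest-odd) term does not cancel against the tail. Since $P$ is a fixed polynomial this is a finite sum and the ratios of factorials are genuinely $O(n^{-\lceil k/2\rceil})$, so the estimate is routine once set up carefully; the only real subtlety is isolating which coefficient of $P$ plays the role of the dominant term in the odd-index case and verifying it is nonzero unless $P$ has no odd-degree terms. I expect no essential difficulty beyond bookkeeping, and the argument is completely elementary, using only Stirling's formula and the Cauchy product for power series.
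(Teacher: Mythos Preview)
Your overall strategy matches the paper's: write $a_n$ as a Cauchy product, pull out a factor of the form $a^{\text{something}}/(\text{something})!$, observe that what remains has constant sign for large $n$, and note that the exponent of $a<0$ jumps by exactly one when passing from $a_{n-1}$ to $a_{n+1}$. That is precisely what the paper does, so the plan is sound.

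However, there is a concrete computational slip. After factoring out $a^{\lfloor n/2\rfloor}/\lfloor n/2\rfloor!$, the remaining sum does \emph{not} converge to $c_0$; the terms $c_{k}\,a^{-k/2}\,\lfloor n/2\rfloor!/\lfloor (n-k)/2\rfloor!$ behave like $(n/2)^{k/2}$, not like $n^{-k/2}$. So it is the \emph{highest}-degree coefficient of $P$ of the relevant parity that dominates, not the lowest, and your ``$O(1/n)$'' claim is exactly backwards. Fortunately this changes nothing in the argument: the residual factor is a genuine polynomial in $k=\lfloor n/2\rfloor$ (the paper names it $P_1$ for odd $n$ and $P_2$ for even $n$), and a nonzero polynomial has constant sign for all sufficiently large arguments. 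That is all you need for $a_{n-1}a_{n+1}\le 0$ eventually, and it also dissolves your worry about identifying which coefficient of $P$ is ``dominant'': either the polynomial $P_1$ (resp.\ $P_2$) is identically zero, in which case all odd-indexed (resp.\ even-indexed) $a_n$ vanish and the inequality is $0\le 0$, or it is eventually of one sign.

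For the growth statement your Stirling computation still works after this correction, since the extra polynomial factor contributes nothing to $\limsup n|a_n|^{2/n}$. The paper instead observes directly that $\rho_\varphi=2$ and $\sigma_\varphi=-a$ and reads off the conclusion from the standard formula $\sigma_f=\limsup_n n|a_n|^{\rho_f/n}/(e\rho_f)$; this is a bit cleaner than going through Stirling by hand, but your route is equally valid.
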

However, the pattern of the signs of the coefficients of functions in $\mathcal{LP}^a$ we observed before stating the above question, admits exceptions. Hence the answer to   
Question \ref{Prob1}  is negative. Moreover, exceptions occur rather frequently, for entire function in $ \mathcal{LP}^a$ of any possible order, as seen in the following:
\begin{thm} \label{Th3}
For every $\rho$ with $0\leq\rho\leq2$, there is an entire function $\Psi(z)$ of order $\rho_\Psi=\rho$, of the form 
\begin{equation}\label{fatorização3}
\psi(z)=\prod_{n=1}^{\infty}\left(1-\frac{z}{x_n}\right)\exp\left(\frac{\lambda z}{x_n}\right),
\end{equation}
such that, for every $a<0$, the function 
$$
\varphi(z)=\exp(az^2)\, \psi(z)=\sum_{n=0}^{\infty}a_n z^n
$$ 
is  in $\mathcal{LP}^a$ and possesses the property that for every $n_0 \in \mathbb{N}$, there is $n>n_0$, such that 
$a_{n-1} a_{n+1} > 0$.
\end{thm}

\section{Preliminaries}
Despite that most of the fact in this section are basic in the theory of entire functions and can be found in classical reading as \cite{Boas, Lev1}, we shall list them in the form we need in the proofs in order 
to make the reading relatively self-contained.

Given a  sequence of nonzero complex numbers $\{ z_n \}_{n\in \mathbb{N}}$, without accumulation points, we consider:
\begin{itemize}
\item[(i)] the associated canonic product 
$$
f(z)=\prod_{n=1}^{\infty}G\left(\frac{z}{z_n};p\right),
$$
where $p$ is the genus  of the canonic product, that is, the smallest integer for which 
$$
\sum_{n=1}^{\infty}\frac{1}{|z_n|^{p+1}}
$$ 
converges and
$$
G\left(u;p\right)=(1-u)e^{u+u^{2}/2+\dots+u^{p}/p};
$$
\item[(ii)] the exponent of convergence $\lambda$ of the sequence  as the  infimum of all positive numbers $t$ such that
$$
\sum_{n=1}^{\infty}\frac{1}{|z_n|^t}
$$
converges;
\item[(iii)] the superior density $\Delta$ of the sequence defined by
$$\Delta=\limsup_{r\to \infty} \frac{n(r)}{r^{\lambda}},
$$
where $n(r)=\# \{ n : |z_n|\leq r \}$.
\end{itemize}

As it is well known, an entire function 
$$
f(z)=\sum_{n=0}^{\infty}a_n z^n
$$ 
is of order $\rho_f$ if
$$
\limsup_{r\to \infty}\frac{\log \log M(r)}{\log r}=\rho_f,
$$
where $M(r)=M_f(r)=\max_{|z|=r}|f(z)|$. If $\rho_f<\infty$, then $f$ is said to be of  type $\sigma_f$ if
$$
\limsup_{r\to \infty}\frac{\ \log M(r)}{r^{\rho}}=\sigma_f.
$$

When $0<\rho_f<\infty$ the order and the type are determined in terms of the coefficients via 
\begin{eqnarray}
\rho_f & = & -\limsup_{n\to \infty} \frac{n\log n }{\log |a_n|}, \nonumber\\
\ & & \label{rscoef} \\
\sigma_f & = & \limsup_{n\to \infty} \frac{n\sqrt[n]{|a_n|^{\rho_f}}}{e\rho_f}. \nonumber
\end{eqnarray}
Borel's theorem claims that the order of a canonic product is equal to the exponent of convergence of its zeros.

 The category $\kappa_f$ of the entire function $f$ is the pair of its order and type, 
that is $\kappa_f=(\rho_f,\sigma_f)$.  The entire functions are partially ordered (denoted $\preceq$) according to alphabetic order of their categories. Precisely, $f$ and $g$ belong to the same category provided 
both their orders and types coincide. Otherwise, if $\rho_f>\rho_g$ then $f$ belongs to a higher category. Finally, if $\rho_f=\rho_g$ but $\sigma_f>\sigma_g$ then $f$ is of a higher category. 
The so-called Theorem of categories (see \cite[Theorem 12, p. 23]{Lev1}) states if the entire functions $f$ and $g$ belong to distinct categories, then the product $fg$ belongs to the category of the factor with 
the higher category. In other words,
\begin{equation}
\label{cat}
\kappa_{fg} = \max\{ \kappa_f, \kappa_g\}.
\end{equation}

Hadamard's theorem claims that every entire function $f$ of finite order $\rho_f$ can be represented in the form 
\begin{equation}
\label{Had}
f(z)=z^k \exp(P(z))\prod_{n=1}^{\infty}G\left(\frac{z}{z_n};p\right),
\end{equation}
where $k\in \mathbb{N}\cup\{0\}$, $P$ is an algebraic polynomial of degree not exceeding $\rho_f$,  and $z_n$ are the zeros of $f$ distinct from the origin.

We state the classical theorem of Lindel\"of in its complete form, as in \cite[Theorem 15, p. 28]{Lev1}. 
  
\begin{THEO} {\rm (Lindel\"of)} 
\label{lindelöf} Let $f$ be an entire function with Hadamard product {\rm(\ref{Had})}, of finite order $\rho_f$, type $\sigma_f$, genus $p$ of its canonic product and superior density of its zeros $\Delta_f$. Then the following hold:
\begin{itemize}
\item[(i)] If $\rho_f$ is not integer, then $\Delta_f$ and $\sigma_f$ are simultaneously equal to either $0$, a finite number, or $\infty$.
\item[(ii)] If $\rho_f$ is an integer and $\rho_f> p$, then $f$ is of type $\sigma_f=|\alpha_{\rho_f}|$, where $\alpha_{\rho_f}$ is the coefficient of  $z^{\rho_f}$ in the expansion of the polynomial $P(z)$.
\item[(iii)] If $\rho_f=p$ and 
$$
\delta_f(r) = \left|\alpha_{\rho_f}+\frac{1}{\rho_f} \sum_{|z_n|<r} z_n^{-\rho_f}\right|, \ \ \ \bar{\delta}_f=\limsup_{r\to \infty}\delta_f(r), \ \ \ \gamma_f=\max(\Delta_f,\bar{\delta}_f),
$$
then $\sigma_f$ and $\gamma_f$ are simultaneously equal to either $0$, a finite number, or $\infty$.
\end{itemize}
\end{THEO}


Finally, the classical Hermite-Biehler theorem states that all zeros of the algebraic polynomial
$$
r(z)=u(z)+iv(z),
$$
where $u(z)$ and $v(z)$ are polynomials with real coefficients, belong to one of the open semi-planes determined by the real axis, if and only if both
$u$ and $v$ are hyperbolic polynomials and their zeros interlace.

\section{Proofs} 
We begin this section with a technical lemma which shows how to determine the order and the type of an entire function from the asymptotic behaviour of 
its Taylor coefficients. The result might be of independent interest and the idea of the proof goes back to \cite[Theorem 2, p. 4]{Lev1}.
\begin{lem}\label{lema resultado2} Let $\rho>0$ and $f(z)=\sum_{n=0}^{\infty}a_n z^n$ be an entire function, such that $0< \limsup_{n\to \infty} (n\sqrt[n]{|a_n|^\rho})<\infty$. Then 
$\rho_f=\rho$ and $0<\sigma_f<\infty$.
\end{lem}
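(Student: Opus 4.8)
The plan is to turn the hypothesis into two--sided asymptotic bounds for $|a_n|$ and then read off the order and the type from the coefficient formulas \eqref{rscoef}; this is the circle of ideas of \cite[Theorem~2, p.~4]{Lev1}. Write $L=\limsup_{n\to\infty} n\sqrt[n]{|a_n|^{\rho}}$, so that $0<L<\infty$ by hypothesis, and fix $\varepsilon\in(0,L)$. By the definition of the upper limit there is an index $N$ with
$$
|a_n|^{\rho/n}<\frac{L+\varepsilon}{n}\qquad(n\ge N),
$$
and a strictly increasing sequence $(n_k)$ along which
$$
|a_{n_k}|^{\rho/n_k}>\frac{L-\varepsilon}{n_k}.
$$
In particular $\sqrt[n]{|a_n|}\to0$ (reconfirming that $f$ is entire) and $\log(1/|a_n|)\to\infty$, so all the quotients below are eventually well defined and positive.

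First I would pin down the order. Taking logarithms in the first bound, for $n\ge N$,
$$
\log\frac{1}{|a_n|}>\frac{n}{\rho}\bigl(\log n-\log(L+\varepsilon)\bigr),
\qquad\text{so}\qquad
\frac{n\log n}{\log(1/|a_n|)}<\frac{\rho\log n}{\log n-\log(L+\varepsilon)}\longrightarrow\rho ,
$$
which gives $\limsup_{n}\tfrac{n\log n}{\log(1/|a_n|)}\le\rho$. In the same way the lower bound along $(n_k)$ yields $\tfrac{n_k\log n_k}{\log(1/|a_{n_k}|)}>\tfrac{\rho\log n_k}{\log n_k-\log(L-\varepsilon)}\to\rho$, so the upper limit is also $\ge\rho$. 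Hence $\limsup_{n}\tfrac{n\log n}{\log(1/|a_n|)}=\rho$, a finite positive number, and the coefficient expression for the order in \eqref{rscoef} identifies $\rho_f=\rho$; in particular $0<\rho_f<\infty$.

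Next I would observe that, with $0<\rho_f=\rho<\infty$ in hand, the second identity in \eqref{rscoef} applies verbatim and yields
$$
\sigma_f=\frac{1}{e\rho_f}\,\limsup_{n\to\infty} n\sqrt[n]{|a_n|^{\rho_f}}=\frac{L}{e\rho},
$$
which is finite and strictly positive because $0<L<\infty$. This establishes both assertions.

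The one point that needs care is the legitimacy of the coefficient formula for the order when the finiteness of $\rho_f$ is not yet known; this is settled by a direct estimate of $M_f(r)=\max_{|z|=r}|f(z)|$ in the style of \cite[Theorem~2, p.~4]{Lev1}. From $|a_n|\le\bigl((L+\varepsilon)/n\bigr)^{n/\rho}$ ($n\ge N$) the maximal term of $\sum_n|a_n|r^n$ is located near $n^{\ast}\sim(L+\varepsilon)r^{\rho}/e$ and has logarithm $\sim(L+\varepsilon)r^{\rho}/(e\rho)$, whence $\log M_f(r)\le(L+\varepsilon)r^{\rho}/(e\rho)+O(\log r)$; conversely Cauchy's inequality $|a_{n_k}|\le M_f(r)r^{-n_k}$ evaluated at $r^{\rho}=e\,n_k/(L-\varepsilon)$ gives $\log M_f(r)\ge(L-\varepsilon)r^{\rho}/(e\rho)$ along a sequence $r\to\infty$. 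Letting $\varepsilon\downarrow0$ reproves $\rho_f=\rho$ and $\sigma_f=L/(e\rho)$ without any appeal to \eqref{rscoef}. I expect this maximal--term bookkeeping — selecting the optimal index and controlling the tail of the series — to be the main (and essentially only) technical obstacle in the write--up.
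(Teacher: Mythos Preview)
Your argument is correct, and in fact sharper than the paper's: you obtain the exact value $\sigma_f=L/(e\rho)$, not merely $0<\sigma_f<\infty$. The route, however, is genuinely different. The paper argues by two separate contradictions: assuming $\rho_f>\rho$, it uses the upper bound $|a_n|<((\rho eK)/n)^{n/\rho}$ to dominate $M_f(r)$ by $e^{2Kr^{\rho}}$ and clashes this with the growth forced by $\rho_f>\rho$; assuming $\rho_f<\rho$, it uses Cauchy's estimate to push $\limsup n|a_n|^{\rho/n}$ down to $0$. Only after concluding $\rho_f=\rho$ does it invoke \eqref{rscoef} for the type. You instead compute the quantity $\limsup_n \tfrac{n\log n}{\log(1/|a_n|)}$ directly from the two--sided hypothesis and identify it with $\rho$, then read off $\sigma_f$ from the second line of \eqref{rscoef}. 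Your maximal--term backup paragraph is essentially the same estimate the paper carries out in its first reductio, so the analytic content overlaps; the packaging is what differs. One small remark: the coefficient formula for the order in \eqref{rscoef} (as in \cite[Theorem~2]{Lev1}) is in fact valid for arbitrary entire functions, including the cases $\rho_f=0$ and $\rho_f=\infty$, so your caution about ``legitimacy before finiteness is known'' is not strictly necessary --- but your direct $M_f(r)$ argument makes the proof self-contained either way.
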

\begin{proof} The proof goes by {\em reductio ad absurdum}. First we assume  that $\rho_f>\rho$. It follows from $\limsup_{n\to \infty} (n\sqrt[n]{|a_n|^\rho})<\infty$ that there is $K>0$, such that
$$
\frac{n\sqrt[n]{|a_n|^\rho}}{\rho e}<K \ \ \mathrm{for} \ \ n>n(K),
$$
or equivalently,
\begin{equation}\label{2}
|a_n|<\left(\frac{\rho eK}{n}\right)^{n/\rho} \ \ \mathrm{for} \ \ n>n(K).
\end{equation}
Let $|z|=r$, where $r>r(K)$ is large enough so that the inequality $N(r)=[2^\rho \rho eK r^{\rho}]>n(K)$ holds. Here $[\cdot]$ stands for the integer part. Then we employ the estimate (\ref{2}) to 
obtain 
\begin{eqnarray*}
|f(z)| & \leq & \sum_{n=0}^{N(r)} |a_n| r^{n} + \sum_{n=N(r)+1}^{\infty} |a_n|r^{n} \\
\ & \leq & \sum_{n=0}^{N(r)}|a_n|r^{n}+2^{-N(r)}\\
\ & < & (N(r)+1)\max_{n}\{a_nr^{n}\}+2^{-N(r)}.
\end{eqnarray*}
Hence, 
$$
M_f(r)<( 2^\rho \rho eK r^{\rho}+1)\, \max_{n}\{a_nr^{n}\}+1 \ \ \mathrm{for} \ \ r>r(K).
$$
Observe that $f$ is not a polynomial because we assumed that $\rho_f>\rho>0$. Then the Cauchy estimate  yields that $M_f(r)$ increases faster than any power of $r$. The latter inequality implies that $\max_{n}\{a_nr^{n}\}$ increases faster than any power of $r$ too. This means that the index $n$, where the maximum is attained increases going to infinity as $r\to \infty$. Redefining $r(K)$, if necessary, so that the index $n$ where  $\max_{n}\{a_nr^{n}\}$ occurs be such that $n>n(K)$,  we obtain
$$
\max_{n}\{a_nr^{n}\}\leq \left(\frac{\rho eK}{n}\right)^{n/\rho}r^{n}, \ \ \ \ \ \ r>r(K).
$$
A straightforward analysis show that the maximum on the right-hand side of the latter is attained at $n=\rho K r^\rho$. Thus 
$$\max_{n}\{a_nr^{n}\}\leq e^{K r^{\rho}},\ \ \ \ \ \ r>r(K).$$
Then
$$
M_f(r)<(2^\rho \rho eK r^{\rho}+2)e^{Kr^{\rho}}\ \ \mathrm{for} \ \ r>r(K),
$$
and, increasing $r(K)$ if necessary, 
\begin{equation}\label{contradição1}
M_f(r)<e^{2K r^{\rho}},\ \ \ \ \ \ r>r(K).
\end{equation}
On the other hand, the definition of the order of an entire function and the assumption that $\rho_f>\rho$ show that we may chose $\epsilon_K>0$ in such a way that $\rho_f-\epsilon_K>\rho$ and $r^{\rho_f-\epsilon_K-\rho}>2K$ for every $r>r(\epsilon_K)$ and 
 $$
 M_f(r)>\exp (r^{\rho_f-\epsilon_k})=\exp (r^{\rho_f-\epsilon_K-\rho}r^{\rho})>\exp (2Kr^{2}), \ \ \ \ \ \ r>r(\epsilon_K).
 $$
The latter contradicts (\ref{contradição1}). 

Assume now that $\rho_f<\rho$. Then, given $\epsilon>0$, there exists $r(\epsilon)$ such that  
$$
M_f(r)\leq \exp(\epsilon r^\rho),  \ \ \ \ \ \ r>r(\epsilon).
$$
By the Cauchy estimate
$$
|a_n|  \leq \frac {M_f(r)} {r^n} \leq \frac { \exp(\epsilon r^\rho)} {r^n},  \ \ \ \ \ \ r>r(\epsilon).
$$
The minimum of $r^{-n}\exp(\epsilon r^\rho)$, when $r>0$, is attained at $r=(n/\epsilon \rho)^{1/\rho}$. Choose now $n(\epsilon)$ such that $(n/\epsilon \rho)^{1/\rho}>r(\epsilon)$ for all $n>n(\epsilon)$, to obtain
$$
|a_n|  \leq \left(\frac{\rho \epsilon e}{n}\right)^{n/\rho}, \ \ \ \ \ \ n>n(\epsilon),
$$
which is equivalent to  
$$n\sqrt[n]{|a_n|^\rho}<\rho\epsilon e, \ \ \ \ \ \ n>n(\epsilon).$$
Since the above reasonings hold for any $\epsilon>0$, then
$$
\limsup_{n\to \infty} (n\sqrt[n]{|a_n|^\rho})=0.
$$
This a contradiction to the hypothesis that $\limsup_{n\to \infty} (n\sqrt[n]{|a_n|^\rho})>0$. 

Therefore, $\rho_f=\rho$. The fact that  $0<\sigma_f<\infty$ follows from (\ref{rscoef}).
\end{proof}

\begin{proof} {\em (of Theorem \ref{Resultado 2})} 
By Lemma \ref{lema resultado2}, $\rho_\varphi=2$ and $0<\sigma_\varphi<\infty$.  Hadamard's factorisation theorem implies that $\varphi$ can be represented as
\begin{equation}\label{fatorização}
\varphi(z)=cz^k \exp(az^2+bz)\prod_{n=1}^{\infty}G\left(\frac{z}{x_n};p\right),
\end{equation}
where  $k\in \mathbb{N}\cup \{0\}$,  $a,b,c\in \mathbb{R}$,  and $x_n$ are the zeros of $\varphi$ distinct from $0$. We shall prove that the series 
\begin{equation}\label{série}
\sum_{n=1}^{\infty}\frac{1}{x_n^{2}}
\end{equation}
converges. Let $\rho_\Pi$ and $\sigma_\Pi$ be the order and the type of the canonic product in (\ref{fatorização}). Since $\rho_\varphi=2$, then (\ref{cat}) implies 
\begin{equation}\label{contr}
\kappa_{\Pi} \preceq (2,\sigma_\varphi).
\end{equation}
Assume that the series (\ref{série}) diverges. Then the exponent of  convergence $\lambda_\Pi$ of the canonic product in (\ref{fatorização}) obeys  $\lambda_\Pi \geq 2$. It follows from  
the above mentioned theorem of Borel and (\ref{contr}) that  $\lambda_\Pi=\rho_\Pi=2$. Since (\ref{série}) diverges, then we must have $p=2$. Hence $\rho_\Pi=p$ and, by item (iii) of Teorema \ref{lindelöf},
$$
\sigma_\Pi = \limsup_{r\to \infty}\left(\frac{1}{2}\sum_{|x_n|<r}\frac{1}{x_n^{2}}\right)=\infty.
$$
Therefore,  $\kappa_\Pi=(2,\infty)$, so that, by (\ref{contr}),  $\kappa_\varphi=(2,\infty)$. This contradicts $\sigma_\varphi<\infty$. Hence, the series (\ref{série}) converges.

The convergence of  (\ref{série}) implies that $p\leq1$ and that (\ref{fatorização}) can be written in the form
\begin{equation}\label{fatorização2}
\varphi(z)=cz^k\exp(az^2+bz)\prod_{n=1}^{\infty}\left(1-\frac{z}{x_n}\right)\exp\left(\frac{\lambda z}{ x_n}\right),
\end{equation}
with $k\in \mathbb{N}\cup\{0\}$, $a, b, c\in \mathbb{R}$, where $x_n \in \mathbb{R}$ for every $n\in \mathbb{N}$,  $\lambda \in \{0,1\}$ and $\sum_{}^{}(1/x_{n})^{\lambda+1}<\infty$. 
It remain to prove only that  $a<0$ in (\ref{fatorização2}). Assume the contrary, that  $a\geq0$. 

However, there is $n_0$, such that $a_{n-1}a_{n+1}\leq 0$ for all $n>n_0$. Then we can write $\varphi(z)=\sum_{n=0}^{\infty}a_n z^n$ in the form
\begin{equation}\label{decomposição1}
\varphi(z)=P_{n_0}(z)+g(z),
\end{equation}
with a polynomial $P_{n_0}(z)$ of degree $n_0$ and $g$ an entire function of the form 
$$
g(z)=\sum_{n=0}^{\infty}\,sgn(b_n)\,|b_n|\, z^n\ \ \ \mathrm{with}\ \ \ sgn(b_{n-1})\, sgn( b_{n+1})\leq 0\ \ \ \mathrm{for\ all}\ \  n>0.
$$ 
If $t>0$ then 
\begin{eqnarray*} 
\left|g(it)\right| & = & \left| \sum_{n=0}^{\infty}sgn(b_n)|b_n|(it)^n \right|=\left| sgn(b_0)\sum_{n=0}^{\infty} |b_{2n}| t^{2n} + i\, sgn(b_1)\sum_{n=0}^{\infty} |b_{2n+1}| t^{2n+1} \right |\\
\ & = & \sqrt{\left(\sum_{n=0}^{\infty} |b_{2n}| t^{2n}\right) ^2+\left(\sum_{n=0}^{\infty} |b_{2n+1}| t^{2n+1}\right)^2}\\
\ & \geq & \frac{1}{\sqrt{2}} \sum_{n=0}^{\infty} |b_{n}|t^{n}\geq \frac{1}{\sqrt{2}} \max_{|z|= t} |g(z)|=\frac{1}{\sqrt{2}}M_g(t).
\end{eqnarray*}
Since $\rho_\varphi=2>p$, then item (ii) of Theorem \ref{lindelöf} yields $0<\sigma_\varphi=a$. Now (\ref{rscoef}) allows us to conclude that $\rho_g=2$ and $\sigma_g=a>0$. 
This implies that there is $t_0>0$, such that
$$
M_g(t)>\exp ((a/2)t^2) \ \ \mathrm{and} \ \  \frac { \sqrt{2}|P_{n_0}(it)|}{\exp ((a/2)t^2)}<\frac{1}{2}\ \ \mathrm{for\ every} \ \  t>t_0.
$$
Therefore
\begin{eqnarray*}
|\varphi(it)| & \geq & |g(it)|-|P_{n_0}(it)|\geq \frac{1}{\sqrt{2}}M_g(t)-|P_{n_0}(it)|\\
\ & \geq  & \frac{1}{\sqrt{2}}\exp ((a/2)t^2) \left( 1-\frac{ \sqrt{2}|P_{n_0}(it)|}{\exp ((a/2)t^2)}\right).
\end{eqnarray*} 
Finally we conclude that 
\begin{equation}\label{contradição3}
|\varphi(it)|> \frac{1}{2\sqrt{2}}\exp ((a/2)t^2) \ \ \mathrm{for\ every} \ \  t>t_0.
\end{equation}
The  convergence of the series (\ref{série}) shows that there are two possibilities: either $\rho_\Pi<2$ or $\rho_\Pi=2$ and, in the latter case, $\sigma_\Pi=0$ because of item (ii) of Theorem \ref{lindelöf}. 
In both cases, there is $t_1>0$, such that  
$$
\left| \prod_{n=1}^{\infty}\left(1-\frac{it}{a_n}\right)\exp\left(\frac{\lambda i t}{a_n}\right)\right|<\exp((a/2)t^2)\ \ \mathrm{and} \ \  |c|t^k\exp(-(a/2)t^2)<1 \ \ \mathrm{for} \ \  t> t_1.
$$
Using this inequality in (\ref{fatorização2}) we obtain  
$$
|\varphi(it)|=|c|t^k\exp(-at^2)\left| \prod_{n=1}^{\infty}\left(1-\frac{it}{\alpha_n}\right)\exp\left(\frac{\lambda i t}{\alpha_n}\right)\right|<|c|t^k\exp(-(a/2)t^2)<1, \ \ t> t_1.
$$
This contradicts  (\ref{contradição3}). Therefore $a<0$ in (\ref{fatorização2}) and $\varphi\in \mathcal{LP}^a$.
\end{proof}

\begin{cor}\label{resultado1} Let $\varphi(z)=\sum_{n=0}^{\infty}a_n z^n$ be an even (odd) function with only real zeros. Then $\varphi \in \mathcal{LP}^a$ if and only if $0< \limsup (n\sqrt[n]{|a_n|^2})<\infty$ and $a_{n-1}a_{n+1}\leq 0$ for every $n\in\mathbb{N}$.
\end{cor}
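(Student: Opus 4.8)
The statement is an equivalence, and its forward implication is immediate: if $\varphi$ is even or odd, has only real zeros, satisfies $0<\limsup_{n\to\infty} n\sqrt[n]{|a_n|^2}<\infty$, and $a_{n-1}a_{n+1}\le 0$ for every $n\in\NN$, then the hypotheses of Theorem~\ref{Resultado 2} are fulfilled (take $n_0=1$), whence $\varphi\in\mathcal{LP}^a$. Thus the whole content of the corollary is the reverse implication, which I would split into the asymptotic estimate on $\{a_n\}$ and the sign pattern.

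For the asymptotic estimate it is cleanest to prove the slightly stronger assertion that $0<\limsup_{n\to\infty} n\sqrt[n]{|a_n|^2}<\infty$ holds for \emph{every} $\varphi\in\mathcal{LP}^a$, with no parity hypothesis. Writing $\varphi$ as in (\ref{LP}) with $a<0$, in the Hadamard factorization of $\varphi$ the exponential polynomial has degree $2$ with leading coefficient $a$, and the canonic product has genus $p\le 1$ (since $\sum_k|x_k|^{-\lambda-1}<\infty$ with $\lambda\le 1$). Hence $\rho_\varphi=2$, and since $\rho_\varphi$ is an integer exceeding $p$, item (ii) of Lindel\"of's theorem gives $\sigma_\varphi=|a|$, which is positive and finite. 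The coefficient formula (\ref{rscoef}) with $\rho_\varphi=2$ then yields $\limsup_{n\to\infty} n\sqrt[n]{|a_n|^2}=2e\,\sigma_\varphi=2e|a|\in(0,\infty)$.

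The sign pattern requires more work. I would first reduce the odd case to the even one: if $\varphi\in\mathcal{LP}^a$ is odd, then $\varphi(z)/z$ is even and again in $\mathcal{LP}^a$, and the inequalities $a_{n-1}a_{n+1}\le 0$ for $\varphi$ correspond to the ones for $\varphi(z)/z$ (the $n=1$ instance being automatic, as $a_0=0$). So assume $\varphi$ even. Being real, even, entire and having only real zeros, $\varphi$ has a zero of even order $m$ at the origin, the term $e^{bz}$ in (\ref{LP}) forces $b=0$, and the nonzero zeros occur in pairs $\pm x_k$ whose exponential factors $e^{\lambda z/x_k}$ cancel; so (\ref{LP}) collapses to $\varphi(z)=c\,z^{m}e^{az^2}\prod_k\bigl(1-z^2/x_k^2\bigr)$ with $x_k>0$ and $\sum_k x_k^{-2}<\infty$. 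Therefore $\varphi(z)=f(z^2)$, where $f(w)=c\,w^{m/2}e^{aw}\prod_k\bigl(1-w/x_k^2\bigr)$ is entire with only nonnegative zeros, a linear exponential of nonpositive coefficient $a$, and $\sum_k x_k^{-2}<\infty$; that is, $f\in\mathcal{LP}I$, and $f$ is not a polynomial because $a\ne 0$. The crucial claim is that the Maclaurin coefficients $b_j$ of $f$ vanish for $j<m/2$ and strictly alternate in sign for $j\ge m/2$. One way to establish this is to consider $f(-w)=c(-1)^{m/2}\,w^{m/2}e^{|a|w}\prod_k\bigl(1+w/x_k^2\bigr)$: up to the nonzero constant this is a product of power series all of whose coefficients are nonnegative, and the factor $e^{|a|w}$ (with $|a|>0$) forces the coefficient of every power $w^{i}$ in $e^{|a|w}\prod_k(1+w/x_k^2)$ to be strictly positive; hence each coefficient of $f(-w)$ of index $\ge m/2$ is nonzero, of the fixed sign $\mathrm{sgn}(c)(-1)^{m/2}$, and the lower ones vanish, and passing from $w$ to $-w$ gives the claim. (Alternatively, the elementary fact recalled in the Introduction says the coefficients of a nonpolynomial member of $\mathcal{LP}I$ are either all of one sign or strictly alternating; the factor $e^{aw}$ with $a<0$ excludes the first alternative. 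The direct computation has the advantage of also ruling out interior zeros.) Since $a_{2j}=b_j$ and $a_{2j+1}=0$, the inequalities $a_{n-1}a_{n+1}\le 0$ now follow at once: for even $n$ both neighbours are zero, while for $n=2j+1$ the product $a_{2j}a_{2j+2}=b_jb_{j+1}$ is either $0$ (when $j<m/2$) or a product of two consecutive nonzero coefficients of opposite sign, hence negative.

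The only genuinely delicate step is the strict alternation of the $b_j$ --- excluding both interior zeros and the all-one-sign alternative --- and it is precisely there that the defining property $a\ne 0$ of the class $\mathcal{LP}^a$ is used; everything else, namely the bookkeeping in passing from (\ref{LP}) to the form $f(z^2)$ and the sign count relating $b_j$ to $a_{2j}$, is routine.
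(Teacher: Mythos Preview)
Your proof is correct and follows essentially the same route as the paper's: both arguments write the even (or odd) $\varphi\in\mathcal{LP}^a$ in the form $c\,z^{k}e^{az^{2}}\prod_{n}(1-z^{2}/x_{n}^{2})$, read off $\sigma_{\varphi}=|a|$ from Lindel\"of~(ii) to get the $\limsup$ condition, and then obtain the sign pattern by expanding the product. Your substitution $w=z^{2}$ and the observation that $f(-w)=c(-1)^{m/2}w^{m/2}e^{|a|w}\prod_{k}(1+w/x_{k}^{2})$ has strictly positive coefficients from index $m/2$ on is simply an explicit way of carrying out what the paper calls ``Cauchy's multiplication formula''; the added benefit is that your version makes the strict alternation (hence the role of $a\neq 0$) transparent, whereas the paper leaves this implicit.
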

\begin{proof} If  $0< \limsup (n\sqrt[n]{|a_n|^2})<\infty$ and $a_{n-1}a_{n+1}\leq 0$ for every $n\in\mathbb{N}$, then, by Theorem \ref{Resultado 2}, $\varphi \in \mathcal{LP}^a$. 

Suppose that $\varphi \in \mathcal{LP}^a$ is even (odd). It can be written in the form 
$$
\varphi(z)=cz^k\exp(az^2)\prod_{n=1}^{\infty}\left(1-\frac{z^2}{x_n^2}\right).
$$
It suffices to apply Cauchy's multiplication formula to see that $a_{n-1}a_{n+1}\leq 0$ for all $n\in\mathbb{N}$. Since $\sum_{n=1}^{\infty}(1/x_n^2)<\infty$, then  item (ii) of Theorem \ref{lindelöf} yields $\sigma_\varphi=-a$. 
Finally,   (\ref{rscoef}) implies $0< \limsup (n\sqrt[n]{|a_n|^2})<\infty$.
\end{proof}

\begin{proof} {\em (of Theorem \ref{Resultado 3.})}\ Since $\rho_\varphi=2$ and $\sigma_\varphi=-a$, then (\ref{rscoef})
yields 
$$
0< \limsup (n\sqrt[n]{|a_n|^2}) < \infty.
$$ 
Let  
$$
P(z)=\sum_{n=0}^{2m}\ b_n\, z^n,
$$
with the obvious convention that if $P$ is of odd degree, then $b_{2m}=0$.
Consider another polynomial, 
$$
P_1(z)=a^mb_0+a^{m-1}b_2 z+\dots+b_{2m}z(z-1)\dots(z-m+1).
$$
There exists $k_0\in \mathbb{N}$, such that $P_1(x)$ is strictly positive or strictly negative for $x>k_0$. Let $n=2k+1$ be an odd number with $k>k_0$ and $k\geq m$. The Cauchy multiplication formula  yields
\begin{eqnarray*}
a_{n-1} & = & \sum_{j=0}^{m}\frac{a^{k-j}}{(k-j)!}b_{2j}=\frac{a^{k-m}}{k!}\sum_{j=0}^{m}a^{m-j}\, b_{2j}\, k(k-1)\dots(k-j+1)\\
\ & =& \frac{a^{k-m}}{k!}P_1(k),\\
a_{n+1} & = & \sum_{j=0}^{m}\frac{a^{k+1-j}}{(k+1-j)!} b_{2j}=\frac{a^{k+1-m}}{(k+1)!}\sum_{j=0}^{m}a^{m-j}\, b_{2j}\, (k+1)(k)\dots(k-j+2)\\
\ & = & \frac{a^{k+1-m}}{(k+1)!}P_1(k+1).
\end{eqnarray*}
Having in mind that $a<0$ and $P_1(k)$ and $P_1(k+1)$ have equal signs when $k>k_0$, we see that
$$
a_{n-1} a_{n+1}=\frac{a^{2(k-m)+1}}{k!(k+1)!}P_1(k)\, P_1(k+1)\leq 0, 
$$
for all odd indices $n>2k_0+1$. When $n$ is even, it suffices to apply an analogous argument but employing 
the polynomial  
$$
P_2(x)=a^{m-1}b_1+a^{m-2}b_3(x-1)+\dots+b_{2m-1}(x-1)(x-2)\dots(x-m+1)
$$
instead of $P_1$. Again, there is $n_0$, such that $a_{n-1} a_{n+1}\leq 0$ for all $n>n_0$. 
\end{proof}

Now we establish a lemma whose proof might be of independent interest because it is simpler than the proof of the 
corresponding general claim about functions in the Hermite--Biehler space. 

\begin{lem}\label{Hermite-Biehler generalizado} Let $\varphi(z)=\sum_{n=0}^{\infty}a_n z^n$ be an entire function with only real zeros, all of the same sign and $0\leq \rho_\varphi<1$. Then all the zeros of both 
\begin{eqnarray*}
\varphi^o(z) & = & \sum_{n=0}^{\infty}a_{2n+1} z^{2n+1},\\
\varphi^e(z) & = & \sum_{n=0}^{\infty}a_{2n} z^{2n}
\end{eqnarray*}
 are purely imaginary.
\end{lem}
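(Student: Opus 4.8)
The plan is to realize $\varphi$ as a locally uniform limit of hyperbolic polynomials and then track what happens to the odd and even parts under this limit, using a root-counting argument based on the Hermite--Biehler theorem for each approximating polynomial. First I would observe that since $\varphi \in \mathcal{LP}$ with all zeros of the same sign (say all negative, after possibly replacing $z$ by $-z$), and $\rho_\varphi < 1$, the genus of the canonic product is $0$, so $\varphi(z) = c z^k \prod (1 - z/x_n)$ with $x_n < 0$ and $\sum 1/|x_n| < \infty$; in particular $b = a = 0$. The key structural point is that for a \emph{real} polynomial $p(z)$ with all roots real and of the same sign, the polynomial $p(iz)/i^{?}$ splits into even and odd parts $p^e, p^o$, and I want to show these have only purely imaginary zeros.

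Next I would carry out the finite-degree case. Given a real polynomial $p$ of degree $N$ with all roots in $(-\infty,0)$, write $p(z) = p^e(z) + p^o(z)$ with $p^e$ even and $p^o$ odd. Consider $r(w) := p^e(\sqrt{w}) + \sqrt{w}\,\widetilde{p^o}(w)$ type manipulations — more cleanly, set $z = iy$ and look at $p(iy) = p^e(iy) + p^o(iy) =: U(y) + i V(y)$ where $U, V$ are real polynomials in $y$ (here I use that $p^e$ has only even powers, so $p^e(iy)$ is real, and $p^o$ has only odd powers, so $p^o(iy)$ is purely imaginary). Since $p$ has all its zeros real, $p(iy)$ has no real zeros in $y$, i.e. $U$ and $V$ have no common real zero; moreover, the classical fact that a real polynomial with only real zeros has the property that $p$ and its "rotation" satisfy an interlacing/Hermite--Biehler relation should give that $U$ and $V$ are hyperbolic with interlacing zeros. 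Then $p^e(z) = U(z/i) = U(-iz)$ has zeros exactly at $z$ with $-iz$ a root of $U$, i.e. at $z = i\,(\text{real root of }U)$ — purely imaginary. Similarly for $p^o$. The cleanest route is: the zeros of $p^e$ are $\{iy : U(y) = 0\}$ and zeros of $p^o$ are $\{iy : V(y)=0\} \cup \{0\}$, so it suffices that $U, V$ have only real zeros, which follows because $p$ having only real zeros of one sign is equivalent (via $z \mapsto z^2$ or a Möbius argument) to $U$ and $V$ forming a real pair with interlacing real zeros.

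Then I would pass to the limit. Since $\varphi \in \mathcal{LP}$ with one-signed zeros, there is a sequence of hyperbolic polynomials $p_m$ with one-signed zeros converging to $\varphi$ locally uniformly; the even and odd parts $p_m^e, p_m^o$ then converge locally uniformly to $\varphi^e, \varphi^o$ respectively (the even/odd projection is continuous for locally uniform convergence — it is an average of $p_m(z)$ and $p_m(-z)$). By Hurwitz's theorem, the limit of functions whose zeros all lie on the imaginary axis either is identically zero or has all its zeros on the imaginary axis. Since $\varphi^e, \varphi^o$ are not identically zero (their leading behavior is controlled by $a_0$ or $a_1$ being nonzero after accounting for the trivial zero at $0$; if $\varphi$ is genuinely non-polynomial both are nonzero, and the degenerate cases are handled directly), we conclude all zeros of $\varphi^e$ and $\varphi^o$ are purely imaginary.

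\textbf{Main obstacle.} The delicate point is the finite-degree Hermite--Biehler step: correctly identifying that for a real polynomial with all roots of the same sign, the "real and imaginary parts along the imaginary axis" $U(y), V(y)$ are a genuine Hermite--Biehler pair (both hyperbolic, interlacing), so that $U$ and $V$ have only real zeros. One must be careful that one-signedness of the roots of $p$ is exactly what is needed — for roots of mixed sign the statement fails, as Laguerre's example implicitly warns — and that the argument is the substitution making $p(z)$ with roots $-\xi_n^2$ (say) correspond to $p^e, p^o$ built from $\prod(1+ z^2/\xi_n^2)$-type factors; spelling out this bijection between "real roots of one sign" and "purely imaginary roots of the even/odd parts" without circularity is where the real work lies. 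A secondary nuisance is ruling out the identically-zero alternative in Hurwitz and handling $\varphi$ with only finitely many zeros or with a zero at the origin, but those are routine once the main dichotomy is in place.
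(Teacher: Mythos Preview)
Your proposal is correct and follows essentially the same route as the paper: approximate $\varphi$ by the partial products $P_m(z)=\varphi(0)\prod_{n=1}^m(1-z/x_n)$, observe that $P_m(iz)$ has all its zeros $-ix_n$ in a single open half-plane (this is exactly where one-signedness of the $x_n$ enters), apply Hermite--Biehler to conclude that the real and imaginary parts $U,V$ of $P_m(iz)$ are hyperbolic, deduce that $P_m^e$ and $P_m^o$ have only purely imaginary zeros, and pass to the limit via Hurwitz. The ``main obstacle'' you flag is in fact immediate once phrased this way --- no $z\mapsto z^2$ substitution or M\"obius argument is needed, only the direct observation that the zeros of $P_m(iz)$ lie in one half-plane, so the Hermite--Biehler theorem applies without further work.
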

\begin{proof}  Since $0\leq \rho_\varphi<1$, then the Hadamard product of $\varphi$ is
$$
\varphi(z)=\varphi(0) \prod_{n=1}^{\infty}\left(1-\frac{z}{x_n}\right).
$$
Clearly, $\varphi$ is a local uniform limit of hyperbolic polynomials
$$
P_m(z)=\varphi(0) \prod_{n=1}^{m}\left(1-\frac{z}{x_n}\right) = \sum_{n=0}^{\infty} a_{2n,m} z^{2n}+\sum_{n=0}^{\infty} a_{2n+1,m} z^{2n+1},
$$
with $ a_{n,m}\in \mathbb{R}$ and $a_{2n,m}=a_{2n+1,m}=0$, when $2n>m$. Then obviously
$$
P_m(iz)=\varphi(0) \prod_{n=1}^{m}\left(1-\frac{iz}{x_n}\right)=\sum_{n=0}^{\infty}(-1)^n a_{2n,m} z^{2n}+i\sum_{n=0}^{\infty} (-1)^na_{2n+1,m} z^{2n+1}.
$$
For each fixed $m\in\mathbb{N}$, the zeros of the polynomial $P_m(iz)$  are $-i x_1, \dots,-i x_m$ and they posses imaginary parts of the same sign because the zeros $x_n$ of $\varphi$ are all of the same sign. By the Theorem of Hermite-Biehler, 
the zeros of both $\sum_{n=0}^{\infty}(-1)^n a_{2n,m} z^{2n}$ and $\sum_{n=0}^{\infty} (-1)^n a_{2n+1,m} z^{2n+1}$ are all real \cite{Obre2003}. Therefore, the zeros of $\sum_{n=0}^{\infty} a_{2n,m} z^{2n}$ and  $\sum_{n=0}^{\infty} a_{2n+1,m} z^{2n+1}$ are 
all purely imaginary. Since these polynomials converge locally uniformly to the functions $\varphi^o(z)$ and $\varphi^e(z)$, when $m\to \infty$, it follows from the Theorem of Hurwitz, that the zeros of $\varphi^o(z)$ e $\varphi^e(z)$ are purely imaginary too. 
\end{proof}

\begin{proof}{\em (of Theorem \ref{Th3})}.   First we shall prove that if  $\psi$ is an entire function with the property that at least one of the functions 
$$
\psi^o(z)=\sum_{n=0}^{\infty} b_{2n+1} z^{2n+1}\ \ \mathrm{and}\ \ \psi^e(z)=\sum_{n=0}^{\infty} b_{2n} z^{2n}
$$ 
has an infinite number of purely imaginary zeros, then, for any $a<0$, the function 
$$
\varphi(z)=\exp(az^2)\, \psi(z)=\sum_{n=0}^{\infty}a_n z^n 
$$ 
obeys the property that for every $n_0 \in \mathbb{N}$, there exists $n>n_0$, such that $a_{n-1} a_{n+1} > 0$. 
Indeed, for a given $a<0$, at least one of the functions  
$$
\varphi^o(z)=\sum_{n=0}^{\infty}a_{2n+1}z^{2n+1}=\exp(az^2)\sum_{n=0}^{\infty} b_{2n+1} z^{2n+1},
$$
$$
\varphi^e(z)=\sum_{n=0}^{\infty}a_{2n}z^{2n}=\exp(az^2)\sum_{n=0}^{\infty} b_{2n} z^{2n}
$$
is not a polynomial and possesses infinitely many purely imaginary zeros. Suppose, without loss of generality, that this is $\varphi_2$, that is, 
$\varphi^e(z)=\sum_{n=0}^{\infty}a_{2n}z^{2n}$ is an entire function and it is not a polynomial. Assume that there exists $n_0 \in \mathbb{N}$ such that $a_{n-1} a_{n+1}\leq 0$ for all $n>n_0$. 
Then we may represent it as
\begin{equation}\label{decomposição2}
\varphi^e(z)=P_{n_0}(z)+h(z),
\end{equation}
where $P_{n_0}(z)$ is a  polynomial of degree $n_0$ and 
$$
h(z)=\sum_{n=0}^{\infty}sgn(c_{2n}) |c_{2n}| z^{2n}
$$ 
is an entire function  with $sgn(c_{2n})sgn( c_{2(n+1)})\leq 0$ for every $n\geq0$. 
Then for any $t\in \mathbb{R}$ we have
\begin{eqnarray}
 \left|h(it)\right| & = & \left| \sum_{n=0}^{\infty}sgn(c_{2n})|c_{2n}|(it)^n \right|=\left| sgn(c_0)\sum_{n=0}^{\infty} |c_{2n}| |t|^{2n} \right   |= \sum_{n=0}^{\infty} |c_{2n}| |t|^{2n} \nonumber \\
 \ & \geq & \max_{|z|= |t|} |h(z)|=M_h(|t|). \label{desigualdade}
 \end{eqnarray}
Let $\{it_m\}$ be the sequence of the purely imaginary zeros of $\varphi^e$. Then  (\ref{decomposição2}) and (\ref{desigualdade}) imply 
$$M_h(|t_m|)\leq |P_{n_0}(it_m)|,$$
for all $m\in \mathbb{N}$. The fact that $\varphi^e$ is an entire function shows that $|t_m|\to \infty$ as $m\to \infty$. Thus, there exists $m_0>0$, such that $|P_{n_0}(it_m)|<|t_m|^{n_0+1}$, for all $m>m_0$. 
On the other hand, it follows from the Cauchy estimate that  
$$|c_{2n}|\leq \frac{M_h(|t_m|)}{t_m^{2n}}\leq \frac{|P_{n_0}(it_m)|}{t_m^{2n}}\leq \frac{|t_m|^{n_0+1}}{t_m^{2n}}, \ \ \ \ \ \ m>m_0.$$
However, the latter implies that $c_{2n}=0$ for all $n>(n_0+1)/2$. Then, by  (\ref{decomposição2}), the function $\varphi_2$ is a polynomial, a contradiction. 

Back to the proof of the theorem itself,  suppose first that $\rho=0$. Consider a sequence $x_n$ of real positive numbers with exponent of convergence $\lambda=0$. We may consider, for instance, the sequence $x_n=e^n$.
Then the canonic product  
$$\Pi_1(z)= \prod_{n=1}^{\infty}\left(1-\frac{z}{x_n}\right)$$
is of order $\rho_{\Pi_1}=0$. By Lemma \ref{Hermite-Biehler generalizado} the functions $\Pi_1^o(z)$ e $\Pi_1^e(z)$ have only purely imaginary zeros. Set $\psi(z)=\Pi_1(z)$. By the claim we have proved in the beginning the function $\psi$ is such that 
$\rho_\psi=\rho=0$ and,  for each $a<0$, the function $\varphi(z)=\exp(az^2)\psi(z)=\sum_{n=0}^{\infty}a_n z^n \in \mathcal{LP}^a$ possesses  the property that for every $n_0 \in \mathbb{N}$, there exists $n>n_0$, such that $a_{n-1} a_{n+1} > 0$. 

In the case $0<\rho\leq2$, consider the sequence $y_n$ of real numbers,  symmetric with respect to the origin, with exponent of  convergence $\lambda=\rho$ and such that $\sum (1/|y_n|^2)$  converges. We may consider, for instance, 
the sequence, $y_{2n-1}=-(n\log (n+1) )^{1/\rho}$ and $y_{2n}=(n\log (n+1) )^{1/\rho}$. Then the canonic product 
$$
\Pi_2(z)= \prod_{n=1}^{\infty}\left(1-\frac{z}{y_n}\right)\exp \left( \frac{\lambda z}{y_n} \right), 
$$
with $\lambda\in\{0,1\}$, represents an even entire function. Choose now $\psi(z)=\Pi_2(z)\Pi_1(z)$. Since $\psi^o(z)=\Pi_2(z)\Pi_1^o(z)$ and $\psi^e(z)=\Pi_2(z)\Pi_1^e(z)$, the real and the imaginary parts of $\psi$ possess an infinite number of purely imaginary zeros.  Therefore $\psi(z)=\Pi_2(z)\Pi_1(z)$ is such that, for each $a< 0$, the function $\varphi(z)=\exp(az^2)\psi(z)=\sum_{n=0}^{\infty}a_n z^n$ obeys the property that for every $n_0 \in \mathbb{N}$, there exists $n>n_0$, such that $a_{n-1} a_{n+1} > 0$. 
 Finally, 
observe that arranging the sequence $\{ z_n \}$ of the zeros of $\Pi_1(z)\Pi_2(z)$ in an increasing order of their absolute values, 
$$
\psi(z)=\prod_{n=1}^{\infty}\left(1-\frac{z}{z_n}\right)\exp \left( \frac{\lambda z}{z_n} \right),
$$
with $\lambda\in\{0,1\}$ and $\sum (1/|z_n|^{\lambda+1})$ convergent. Therefore, $\varphi(z)=\exp(az^2)\psi(z)\in \mathcal{LP}^a$.
\end{proof}


\begin{thebibliography}{99}

\bibitem{Boas}
R. P. Boas,  Entire Functions, Academic Press Inc, New York, 1954.

\bibitem{CV} G. Csordas and R. Varga, Necessary and sufficient conditions and the Riemann hypothesis, \textit{Adv. Appl. Math.} \textbf{11} (1990),  328--357.

        
\bibitem{DR}
D. K. Dimitrov and  P. K. Rusev,  Zeros of entire Fourier transforms, \textit{East J. Approx.} \textbf{17} (2011),  1--110.
        
        
\bibitem{Lag}
E. N. Laguerre, Oeuvres, Vol. I, Gauthier-Villan, Paris, 1898.    

\bibitem{Lev1}
B. Ya. Levin, Distribution of Zeros of Entire Functions, Amer. Math. Soc, 1980.

\bibitem{Obre2003}
N. Obrechkoff, Zeros of Polynomials, Marin Drinov Acad. Publ. House, Sofia, 2003.
        


\bibitem{PS}
G. P\'olya and I. Schur, \" Uber zwei Arten von Faktorenfolgen in der Theorie der algebraischen Gleichungen, \textit{J. Reine Angew.}  \textbf{144} (1914), 89--113.

      
 
\end{thebibliography}
\end{document}